\documentclass[11pt]{amsart}

\usepackage[margin=1.1in]{geometry}
\usepackage{amsmath,amssymb,amsthm,mathtools}
\usepackage{booktabs}
\usepackage{enumitem}
\usepackage{tikz}
\usetikzlibrary{arrows.meta,positioning,calc,decorations.pathreplacing,shapes.misc}
\usepackage{pgfplots}
\pgfplotsset{compat=1.18}
\usepackage[colorlinks=true,linkcolor=blue!50!black,citecolor=green!40!black,urlcolor=blue!50!black]{hyperref}
\usepackage[capitalise,nameinlink]{cleveref}

\newtheorem{theorem}{Theorem}[section]
\newtheorem{proposition}[theorem]{Proposition}
\newtheorem{lemma}[theorem]{Lemma}
\newtheorem{corollary}[theorem]{Corollary}
\newtheorem{conjecture}[theorem]{Conjecture}
\theoremstyle{definition}
\newtheorem{definition}[theorem]{Definition}
\newtheorem{example}[theorem]{Example}
\theoremstyle{remark}
\newtheorem{remark}[theorem]{Remark}
\numberwithin{equation}{section}
\crefname{conjecture}{Conjecture}{Conjectures}
\crefname{corollary}{Corollary}{Corollaries}

\newcommand{\PF}{\mathrm{PF}_\infty}
\newcommand{\Lop}{\mathcal{L}_n}
\newcommand{\Fop}{\mathcal{F}}
\newcommand{\hp}{\odot}
\newcommand{\half}{\tfrac12}
\DeclareMathOperator{\sgn}{sgn}

\begin{document}

\title[Zeros of Toeplitz determinants of rising factorial series]{A proof of Karp's conjecture on the zeros of\\ Toeplitz determinants of rising factorial series}

\author{Henry Zweiman}
\address{Brown University, Providence, RI 02912, USA}
\email{henry\_zweiman@brown.edu}

\subjclass[2020]{Primary 26C10; Secondary 30C15, 05A20, 15B05}
\keywords{Real-rooted polynomials, P\'olya frequency sequences, rising factorials, Toeplitz determinants, Hadamard product, interlacing}

\begin{abstract}
 In 2012, D.~Karp studied a family of polynomials constructed from P\'olya frequency sequences and conjectured that all their zeros are real and negative. We prove this conjecture for positive sequences and establish further results concerning the location of these zeros.
\end{abstract}

\maketitle

\section{Introduction}\label{sec:intro}

For a sequence of real numbers $(a_k)_{k\ge0}$, the \emph{rising factorial series}
\[
f(x;z)=\sum_{k\ge0} a_k\,(x)_k\,\frac{z^k}{k!},\qquad (x)_0=1,\quad (x)_k=x(x+1)\cdots(x+k-1),
\]
is a formal power series in $z$ whose coefficients are polynomials in the parameter $x$. The log-concavity of such series with respect to $x$, and the corresponding Tur\'an-type inequalities, have been studied extensively; see \cite{BGR,KK13,Ka12,KS,KZ23}.

Karp \cite{Ka12} considered the Toeplitz determinant
\[
\begin{vmatrix} f(x;z) & f(x+1;z)\\ f(x-1;z) & f(x;z)\end{vmatrix}
= f(x;z)^2-f(x-1;z)\,f(x+1;z)=\sum_{n\ge0}P_n(x)\,\frac{z^n}{n!}.
\]
Multiplying out the series gives\footnote{In \cite{Ka12}, the coefficients are denoted $f_k$, and $P_n(x)=Q_n^{1,1}(x-1)$ in the notation used there.}
\begin{equation}\label{eq:Pn}
P_n(x)=\sum_{k=0}^n\binom nk a_ka_{n-k}\Bigl[(x)_k(x)_{n-k}-(x+1)_k(x-1)_{n-k}\Bigr].
\end{equation}
Karp proved \cite[Theorem~1]{Ka12} that if $a_k^2>a_{k-1}a_{k+1}$ for $1\le k\le n-1$, then $P_n$ has degree $n-2$ and positive coefficients.

\begin{definition}
A sequence $(a_k)_{k\ge0}$ is a \emph{P\'olya frequency sequence of infinite order} ($\PF$) if all minors of the infinite Toeplitz matrix $(a_{j-i})_{i,j\ge0}$, where $a_k=0$ for $k<0$, are nonnegative \cite{Karlin}. A finite sequence $(a_0,\dots,a_n)$ is $\PF$ if its extension by zeros is $\PF$. Equivalently, a finite nonnegative sequence $(a_0,\dots,a_n)$ is $\PF$ if and only if its generating polynomial $\sum_{k=0}^n a_kz^k$ has only real, nonpositive zeros, not identically zero \cite{ASW}.
\end{definition}

Karp formulated the following conjecture.

\begin{conjecture}[{Karp \cite[Conjecture~3]{Ka12}}]\label[conjecture]{conj:karp}
If $\{a_k\}_{k=0}^n$ is a $\PF$ sequence and $n\ge3$, then all zeros of $P_n(x)$ are real and negative.
\end{conjecture}

The real-rootedness characterization in the definition is how the conjecture is phrased operationally in \cite[\S\S4--5]{Ka12}, and how the numerical evidence there was generated. Karp also reported counterexamples showing that the conclusion fails for merely log-concave sequences, and for the analogous polynomials $Q_n^{\alpha,\beta}$ with general shifts $\alpha,\beta>0$ \cite[\S1]{Ka12}. Karp restated the conjecture for positive sequences in 2023 \cite[Karp, \S1, Conjecture~1]{Kry}.

Our main result settles \cref{conj:karp}, with explicit information on the location of the roots.

\begin{theorem}\label{thm:main}
Let $n\ge2$ and let $a_0,\dots,a_n>0$ be such that $p(z)=\sum_{k=0}^na_kz^k$ has only real roots. Then:
\begin{enumerate}[label=\textup{(\alph*)}]
\item $P_n$ has degree exactly $n-2$ and a positive leading coefficient.
\item $P_n$ factors as
\[
P_n(x)=a_0a_n\,2^{n-2}\,(x+1)_{\lceil n/2\rceil-1}\,R(x),
\]
where the polynomial $R$, given explicitly in \eqref{eq:R0}, has degree $\lfloor n/2\rfloor-1$, a positive leading coefficient, and only simple real roots, all of which lie in $\bigl(-\lfloor n/2\rfloor+\half,\,0\bigr)$ and are pairwise more than one unit apart.
\item Consequently, all roots of $P_n$ are real and lie in the interval $\bigl[-\tfrac{n-1}{2},\,0\bigr)$. The left endpoint is a root if and only if $n$ is odd.
\end{enumerate}
In particular, \cref{conj:karp} holds for all positive $\PF$ sequences. When $a_0a_n=0$ the zeros are still real and nonpositive, but $0$ can occur (\cref{prop:degenerate}), so the conjecture as stated requires $a_1a_{n-1}\neq a_0a_n$.
\end{theorem}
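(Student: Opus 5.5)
The plan is to rewrite \eqref{eq:Pn} in a basis adapted to the symmetry $k\leftrightarrow n-k$, pull out the common Pochhammer factor, and read off real-rootedness of the quotient from real-rootedness of $p$. The weights $c_k=\binom nk a_ka_{n-k}$ are symmetric, $c_k=c_{n-k}$. So I first symmetrize \eqref{eq:Pn} and write
\[
P_n(x)=\sum_{k\le n/2} c_k\,E_k(x),
\]
where $E_k$ is the sum of the brackets for $k$ and $n-k$ (halved when $k=n/2$).

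Next I factor $E_k$. Using $(x)_k=x(x+1)_{k-1}$ and $(x-1)_{n-k}=(x-1)\,x\,(x+1)_{n-k-2}$, every product in $E_k$ contains the factor $(x+1)_{\min(k,n-k)-1}$ together with longer Pochhammer tails. The $k=0$ term of \eqref{eq:Pn} vanishes identically, so only $k\ge1$ contributes. I then want to check directly that $(x+1)_{\lceil n/2\rceil-1}$ divides every $E_k$. For $k$ near $n/2$ this is immediate from the factorization just described. For small $k$ I would use the identity $(x+1)_k(x-1)_{n-k}-(x)_k(x)_{n-k}=$ a telescoping difference, which can be written as a sum of terms each containing a long enough Pochhammer block.

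After dividing out, $R(x)=\sum_k c_k\,\widetilde E_k(x)$. I aim to identify the $\widetilde E_k$ as (shifted) binomial-type polynomials in $x$ of degree $\lfloor n/2\rfloor-1$. Comparing leading terms then gives the leading coefficient of $R$ as a positive combination of Newton-type quantities $c_k^2-c_{k-1}c_{k+1}$, or of $a_k^2-a_{k-1}a_{k+1}$. These are positive because $p$ is real-rooted with positive coefficients, and this proves (a) and the degree and sign claims in (b). The constant $a_0a_n2^{n-2}$ should come out by evaluating at a convenient point, or by comparing the coefficient contributed by $c_1$.

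The main step, and the one I expect to be hardest, is the root location of $R$. My plan is to exhibit $R$ as the image of a real-rooted polynomial under a linear operator $\Lop$. The candidate input is the Hadamard-type product $\sum_k c_kw^k$, which I would show is real-rooted via Malo--Schur--Szeg\H{o} composition applied to $p(w)$ and the reversed polynomial $w^np(1/w)$. The operator $\Lop$ sends $w^k$ to a Pochhammer-type polynomial; I would prove it maps real-rooted polynomials with nonpositive roots to polynomials whose roots have mesh $>1$. The natural tool is that $g(x)\mapsto g(x)\pm g(x-1)$, or $x\mapsto$ falling/rising-factorial basis changes, preserves the class of polynomials with mesh $\ge1$ (Brenti/Wagner style results). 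I would first try to realize $\Lop$ as a composition of such operators and diagonal multiplier sequences, tracking strict inequalities to get simplicity and mesh strictly greater than $1$.

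Given such a mesh-$>1$ factorization, the interval $\bigl(-\lfloor n/2\rfloor+\half,0\bigr)$ follows from two facts. The sign of $R$ at $0$ and at $-\lfloor n/2\rfloor+\half$ can be computed from the explicit form, and a polynomial of degree $\lfloor n/2\rfloor-1$ with mesh $>1$ fits in an interval of that length. Part (c) then combines the roots $-1,\dots,-(\lceil n/2\rceil-1)$ of the Pochhammer factor with the roots of $R$. The smallest of these is $-\tfrac{n-1}{2}$ exactly when $n$ is odd, and otherwise all roots lie strictly to the right of $-\tfrac{n-1}{2}$.
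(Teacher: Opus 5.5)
There is a genuine gap, at exactly the step you flag as hardest. The operator statement you plan to prove is false: $\Lop$ does not send polynomials with nonpositive roots to polynomials with mesh $>1$. For $n\ge3$, already $\Lop(1)=n\,(x)_{n-1}$ has roots $0,-1,\dots,-(n-2)$, so its mesh is exactly $1$. For $n=4$ one has $\Lop(z)=x(x+1)(2x+1)$. Dividing by the Pochhammer factor $(x+1)$ does not help, since $x(2x+1)$ still has mesh $\tfrac12$. The conclusion also fails for $P_n$ on palindromic inputs: for $p=(1+z)^4$ one gets $P_4=12(x+1)(35x+30)$, whose roots are $-1$ and $-\tfrac67$. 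Hence no argument that treats $\Lop$, or $\Lop$ followed by division by $(x+1)_{\lceil n/2\rceil-1}$, as a mesh-$>1$ preserver on the whole class of polynomials with nonpositive roots can work, whether built from Brenti/Wagner-type operators or multiplier sequences. The symmetry $c_k=c_{n-k}$ must enter essentially, and your plan uses it only for bookkeeping.

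Several supporting claims are also wrong.
\begin{enumerate}[label=(\roman*)]
\item The $k=0$ summand of \eqref{eq:Pn} does not vanish. It equals $a_0a_n[(x)_n-(x-1)_n]=n\,a_0a_n\,(x)_{n-1}$. After symmetrizing, $E_0=-n(n-1)(x+1)_{n-2}$ enters with a negative sign, and that sign is precisely the difficulty.
\item The leading coefficient of $P_n$ (and of your $R=\sum_kc_k\widetilde E_k$) is linear in the $c_k$, with a negative weight on $c_0$. For $n=4$ it is $12(a_2^2-a_0a_4)=2c_2-12c_0$. So it is not a positive combination of $c_k^2-c_{k-1}c_{k+1}$, nor of $a_k^2-a_{k-1}a_{k+1}$. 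Its positivity needs an actual argument, or a citation of Karp's Theorem~1.
\item The localization step does not follow. Signs at the two endpoints only give the parity of the number of roots in $(-\lfloor n/2\rfloor+\tfrac12,0)$. Mesh $>1$ bounds the spread of the roots from below, not from above.
\end{enumerate}

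What is missing is the change of basis that makes $\Lop$ computable. First, $C$ is palindromic with simple negative roots; the strict Hadamard product theorem is needed here, not just Malo--Schur--Szeg\H{o}. Its roots therefore pair as $\{-s_j,-1/s_j\}$, which gives $C(z)=c_0\sum_j g_j\,z^j(z+1)^{n-2j}$ with $g(t)=\prod_j(1+b_jt)$ and $b_j=s_j+1/s_j-2>0$. On this basis the Vandermonde identity gives
\[
\Lop\bigl(z^j(z+1)^{n-2j}\bigr)=j\,(x)_{j-1}(x+1)_{j-1}(2x+2j)_{n-2j},
\]
which really does vanish at $j=0$. The factor $(x+1)_{\lceil n/2\rceil-1}$ then comes out uniformly, leaving $R=\sum_r h_r\,(x)_r\,(x+r+\tfrac32)_{d-r}$ with $h(t)=g'(t/4)$. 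By Rolle's theorem, $h$ has only negative roots. Factor $h=h(0)\prod_j(1+u_jt)$ and let $Q_k$ be the partial sums built from the first $k$ factors. They satisfy the recurrence $Q_k(x)=(x+k+\tfrac12)Q_{k-1}(x)+x\,u_k\,Q_{k-1}(x+1)$. The inductive hypothesis (roots in $(-k+\tfrac12,0)$ with gaps $>1$) is exactly what makes the roots of the two terms strictly interlace. Hence $Q_k$ has simple roots in $(-k-\tfrac12,0)$ with gaps $>1$. In this argument the location and the mesh are produced together by the interlacing, and that is the ingredient your outline lacks.
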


\begin{remark}\label{rem:sign}
The positivity assumption is only a normalization. Since $P_n$ is a quadratic form in $(a_0,\dots,a_n)$, replacing every $a_k$ by $\lambda a_k$ with $\lambda\neq0$ multiplies $P_n$ by $\lambda^2>0$. If $p$ is any real polynomial of degree $n$ whose roots are all real and negative, then its coefficients are nonzero and all have the sign of $a_n$, and dividing $p$ by $a_n$ reduces to the case of positive coefficients. Hence \cref{thm:main} holds verbatim for every such $p$.
\end{remark}

Because a polynomial with positive leading coefficient and only negative roots has positive coefficients, and because of the Aissen--Schoenberg--Whitney theorem \cite{ASW}, we also obtain:

\begin{corollary}\label[corollary]{cor:PF}
Under the hypotheses of \cref{thm:main}, the coefficient sequence of $P_n$ is a positive $\PF$ sequence. Equivalently, the nonlinear map sending $\{a_k\}_{k=0}^n$ to the coefficient sequence of $P_n$ preserves positive $\PF$ sequences. Moreover, $Q_n^{1,1}(y)=P_n(y+1)$ has all its roots in $\bigl[-\tfrac{n+1}{2},-1\bigr)$; in particular it is Hurwitz stable.
\end{corollary}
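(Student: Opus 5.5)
The corollary follows from \cref{thm:main} by elementary facts about polynomials with negative real roots, together with the Aissen--Schoenberg--Whitney theorem; no new analytic input is needed.

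\emph{Positivity and the $\PF$ property.} By \cref{thm:main}(a) and (c), $P_n$ has degree $n-2$, positive leading coefficient, and all roots real in $\bigl[-\tfrac{n-1}{2},0\bigr)$, hence strictly negative. So
\[
P_n(x)=c\prod_{j=1}^{n-2}(x+r_j),\qquad c>0,\ r_j>0.
\]
Expanding, the coefficient of $x^{n-2-m}$ is $c\,e_m(r_1,\dots,r_{n-2})>0$ for $0\le m\le n-2$. Thus the coefficient sequence $(b_0,\dots,b_{n-2})$ of $P_n$ is strictly positive, and its generating polynomial $\sum_m b_m z^m=P_n(z)$ has only real, nonpositive zeros. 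By \cite{ASW}, as recalled in the definition, $(b_m)$ is a $\PF$ sequence. The reformulation as a statement about the nonlinear map is just a restatement.

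\emph{The roots of $Q_n^{1,1}$.} Since $Q_n^{1,1}(y)=P_n(y+1)$ (footnote to \eqref{eq:Pn}), $y$ is a root of $Q_n^{1,1}$ if and only if $y+1$ is a root of $P_n$. Shifting the interval in \cref{thm:main}(c) by $-1$ gives $\bigl[-\tfrac{n-1}{2}-1,-1\bigr)=\bigl[-\tfrac{n+1}{2},-1\bigr)$. All roots of $Q_n^{1,1}$ are therefore real and $\le$ something strictly negative, so they lie in the open left half-plane. Hence $Q_n^{1,1}$ is Hurwitz stable.

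\emph{Degenerate cases.} For $n=2$, \cref{thm:main} gives that $P_2$ is a positive constant, which is trivially a positive $\PF$ sequence; $Q_2^{1,1}$ has no roots and is vacuously stable. There is no genuine obstacle here. The only point needing care is that the statement uses the \emph{strict} negativity of the roots, i.e.\ that $0$ is excluded by \cref{thm:main}(c), to get positivity of every coefficient rather than mere nonnegativity.
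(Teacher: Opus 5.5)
Your proposal is correct and follows the paper's proof essentially verbatim. You factor $P_n$ from \cref{thm:main} with positive leading coefficient and strictly negative roots, get positive coefficients as elementary symmetric functions, invoke Aissen--Schoenberg--Whitney for the $\PF$ property, and shift the root interval by $-1$ to locate the roots of $Q_n^{1,1}$.
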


The first statement of \cref{cor:PF} is the reformulation of \cref{conj:karp} given in \cite[\S4]{Ka12}; it strengthens \cite[Theorem~1]{Ka12} for $\PF$ sequences. The last statement is the case $\alpha=\beta=1$ of Karp's stability conjecture \cite[Conjecture~2]{Ka12} for $\PF$ sequences (positive $\PF$ sequences are strictly log-concave by Newton's inequalities \cite{HLP}). Nonlinear operators on sequences that preserve $\PF$, or equivalently real-rootedness, have been studied by Br\"and\'en \cite{Br}; the operator in \cref{cor:PF} is of a different character, as discussed in \cite[\S4]{Ka12}.

\subsection*{Organization}
\Cref{sec:prelim} recalls two identities for rising factorials and the strict Hadamard product theorem. \Cref{sec:proof} contains the proof of \cref{thm:main} and \cref{cor:PF}, together with a picture of the pairing step and explicit low-degree formulas. \Cref{sec:remarks} treats the case of $\PF$ sequences with vanishing end terms and contains concluding remarks.

\section{Preliminaries}\label{sec:prelim}

\subsection{Rising factorials}
For an indeterminate (or complex number) $x$ and an integer $k\ge0$, define the \emph{rising factorial} by
\[
(x)_0=1,
\qquad
(x)_k=x(x+1)\cdots(x+k-1) \quad (k\ge1).
\]
\begin{lemma}[Splitting identity for rising factorials]\label{lem:split}
For all integers $k,l\ge0$,
\[
(x)_{k+l}=(x)_k\,(x+k)_l .
\]
\end{lemma}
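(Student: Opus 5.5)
The identity follows directly from the definition, splitting the product of $k+l$ consecutive factors into its first $k$ and its last $l$ factors. For $k=0$ or $l=0$ it is trivial, since $(x)_0=1$ and $(x+0)_l=(x)_l$. For $k,l\ge1$ we write
\[
(x)_{k+l}=\prod_{j=0}^{k+l-1}(x+j)=\Bigl(\prod_{j=0}^{k-1}(x+j)\Bigr)\Bigl(\prod_{j=k}^{k+l-1}(x+j)\Bigr).
\]
The first factor is $(x)_k$ by definition. In the second we reindex with $j=k+i$, $0\le i\le l-1$, which gives $\prod_{i=0}^{l-1}\bigl((x+k)+i\bigr)=(x+k)_l$.

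If one prefers an inductive argument, the same identity follows by induction on $l$. The base case $l=0$ is trivial. For the step, the recursion $(y)_{m+1}=(y)_m\,(y+m)$ gives
\[
(x)_{k+l+1}=(x)_{k+l}\,(x+k+l)=(x)_k\,(x+k)_l\,\bigl((x+k)+l\bigr)=(x)_k\,(x+k)_{l+1}.
\]

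There is no real obstacle here. The only care needed is bookkeeping of the index shift and the empty-product convention. The identity is a polynomial identity in $x$, so it holds for $x$ an indeterminate and hence for every complex $x$.
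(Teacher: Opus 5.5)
Your proof is correct. The paper states this lemma without proof, as immediate from the definition, and your argument (splitting the product of $k+l$ consecutive factors into the first $k$ and the last $l$, then reindexing) is exactly the intended one; the inductive version is a valid but redundant alternative.
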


We will also use the following form of the Vandermonde identity.
\begin{lemma}[Vandermonde identity for rising factorials]\label{lem:vandermonde}
For every integer $N\ge0$ and indeterminates $u,v$,
\[
\sum_{\ell=0}^N\binom N\ell(u)_\ell\,(v)_{N-\ell}=(u+v)_N .
\]
\end{lemma}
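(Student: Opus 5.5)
The plan is to reduce the identity to a product of binomial series and compare coefficients. Since both sides are polynomials in $(u,v)$ with integer coefficients, it suffices to prove the identity as an identity of formal power series in an auxiliary variable $t$, with coefficients in $\mathbb{Q}[u,v]$. The key input is the generalized binomial expansion
\[
(1-t)^{-u}=\sum_{\ell\ge0}(u)_\ell\,\frac{t^\ell}{\ell!},
\]
which I would take as the definition of $(1-t)^{-u}$ in $\mathbb{Q}[u][[t]]$. The functional equation $(1-t)^{-u}(1-t)^{-v}=(1-t)^{-(u+v)}$ then has to be justified in this formal setting.

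To avoid relying on analytic facts about complex powers, I would argue by polynomial identity. For fixed $N$, both sides of the claimed identity are polynomials in $u,v$. It is therefore enough to verify it for all nonnegative integers $u,v$, since a polynomial in two variables that vanishes on $\mathbb{Z}_{\ge0}^2$ is zero. For nonnegative integers, $(u)_\ell/\ell!=\binom{u+\ell-1}{\ell}$ counts multisets of size $\ell$ from a $u$-element set. The identity, after dividing by $N!$, reads
\[
\sum_{\ell=0}^N\binom{u+\ell-1}{\ell}\binom{v+N-\ell-1}{N-\ell}=\binom{u+v+N-1}{N}.
\]
This is a double count: take a multiset of size $N$ from a $(u+v)$-element set, split that set into a $u$-part and a $v$-part, and let $\ell$ be the number of chosen elements lying in the $u$-part. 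Equivalently, it is the coefficient of $t^N$ in the honest identity $(1-t)^{-u}(1-t)^{-v}=(1-t)^{-(u+v)}$ of rational functions.

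As a backup that needs no combinatorics, I would prove the identity by induction on $N$. Write $S_N(u,v)$ for the left side. Using $\binom{N+1}{\ell}=\binom N\ell+\binom N{\ell-1}$ and \cref{lem:split} in the forms $(u)_{\ell+1}=(u)_\ell(u+\ell)$ and $(v)_{N-\ell+1}=(v)_{N-\ell}(v+N-\ell)$, one checks that $S_{N+1}=(u+v+N)\,S_N$. The two boundary contributions combine with the sum over $\ell$, since $(u+\ell)+(v+N-\ell)=u+v+N$. This recursion gives $S_N=(u+v)_N$, because the right side satisfies the same recursion by \cref{lem:split}.

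There is no serious obstacle here. The only point needing care is the index bookkeeping in the Pascal-rule step of the induction, or, in the counting argument, the justification that checking the identity on nonnegative integers suffices for the polynomial identity.
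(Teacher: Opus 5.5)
Your proof is correct, and its main line is the paper's argument: reduce to nonnegative integers $u,v$ because both sides are polynomials, then compare the coefficients of $t^N$ in $(1-t)^{-u}(1-t)^{-v}=(1-t)^{-(u+v)}$. Your backup induction $S_{N+1}=(u+v+N)\,S_N$, obtained from Pascal's rule and \cref{lem:split}, is also valid, and it has the small advantage of working directly with indeterminates, so it needs no reduction to integer values.
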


\begin{proof}
Both sides are polynomials in $u$ and $v$, so it suffices to check the identity for nonnegative integers $u,v$. In this case, we have as formal power series, \[\sum_{k\ge0}(u)_k z^k/k!=(1-z)^{-u}.\] Comparing the coefficients of $z^N/N!$ in $(1-z)^{-u}(1-z)^{-v}=(1-z)^{-(u+v)}$ gives the claim. See also \cite[Ch.~2]{AAR}.
\end{proof}

\subsection{The strict Hadamard product theorem}
\begin{definition}
The \emph{Hadamard product} of two polynomials
$p_1(z)=\sum_k\xi_kz^k$ and $p_2(z)=\sum_k\eta_kz^k$ is their
coefficient-wise product
\[
(p_1\hp p_2)(z)=\sum_k\xi_k\eta_kz^k.
\]
\end{definition}

\begin{lemma}[Strict Hadamard product theorem; {Garloff--Wagner \cite[Theorem~4(a)]{GW}}]\label{lem:hadamard}
Let $p_1,p_2$ be real polynomials with positive leading coefficients and only nonpositive roots. If $p_1\hp p_2\neq0$, then all roots of $p_1\hp p_2$ are real and nonpositive, and they are simple, except possibly for a root at the origin.
\end{lemma}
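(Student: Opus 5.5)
Since \cref{lem:hadamard} is quoted from Garloff--Wagner, the paper can simply cite it. If I were to prove it, I would derive it from the classical Schur--Szeg\H{o} composition theorem together with a Laguerre-type multiplier, and treat simplicity of the roots separately.

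First I normalize. Factor out the powers of $z$: write $p_i(z)=z^{m_i}\tilde p_i(z)$ with $\tilde p_i(0)\neq0$. Then $p_1\hp p_2$ is supported on indices $k\ge\max(m_1,m_2)$, so the origin only contributes a factor $z^{m}$, which the lemma explicitly allows. Truncating the coefficient sequences of $p_1,p_2$ to a common index window does not preserve real-rootedness in general. To avoid this, I would instead pad both polynomials to a common formal degree $n=\max(\deg p_1,\deg p_2)$ and work with the $\binom nk$-normalized coefficients.

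For real-rootedness I would use Schur--Szeg\H{o}. Write $p_1=\sum\binom nk\alpha_kz^k$ and $p_2=\sum\binom nk\beta_kz^k$, where $p_2$ has only nonpositive roots and $p_1$ only real roots. Then the composition $\sum_k\binom nk\alpha_k\beta_kz^k$ has only real roots; in our setting they are nonpositive, because all coefficients are nonnegative. Since $\xi_k\eta_k=\binom nk\cdot\bigl(\binom nk\alpha_k\beta_k\bigr)$, the Hadamard product is obtained from the composition by multiplying its $k$-th coefficient by $\binom nk$. That is, we take its Hadamard product with $(1+z)^n$. The sequence $\binom nk$ is a Laguerre multiplier sequence for polynomials of degree $\le n$ with real roots: this is the case $\phi(z)=(1+z)^n$ of the Laguerre/P\'olya--Schur theorem. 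Hence $p_1\hp p_2$ is real-rooted, and its roots are nonpositive because its coefficients are nonnegative.

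The main obstacle is the simplicity of the nonzero roots. I would get it by checking that each step maps "real-rooted" to "real-rooted with simple roots away from $0$". For the multiplier step, I would realize the multiplication by $\binom nk$ as the operator $\phi(zD)$ with $\phi(t)=\binom nt$, a polynomial in $t$ with only real simple roots $0,1,\dots,n-1$. Then I would factor $\phi(zD)$ into first-order operators $zD-j$, for $j=0,\dots,n-1$. Each factor acts as $f\mapsto z^{j+1}\bigl(z^{-j}f\bigr)'$. By Rolle's theorem, applied on the negative axis to $z^{-j}f$, such a factor sends a nonpositive-rooted polynomial to one whose negative roots have multiplicity reduced by one, with a new root strictly between consecutive distinct roots. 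Iterating $n$ factors strips all multiplicities, since no multiplicity exceeds $n$. What must be checked carefully is the bookkeeping of roots escaping to $0$ or $\infty$ when $\deg<n$. I would handle this by a degree count: the total number of negative roots equals the number of nonzero coefficients minus one, and the interlacing produced by Rolle at each stage forces all of them to be distinct. If that bookkeeping proves awkward, my fallback is a continuity argument. Perturb $p_1,p_2$ to have simple negative roots, use that the Hadamard product is then strictly totally positive on its support, and observe that a double negative root would violate a strict Newton inequality. This uses the fact that a positive PF sequence with a repeated nonzero root forces equality in some $2\times2$ minor of the associated Hankel-type matrix.
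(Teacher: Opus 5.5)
\paragraph{A genuine gap: the simplicity of the roots.} Your real-rootedness step is fine. It is the classical Malo--Schur theorem; routing it through Schur--Szeg\H{o} and then the multiplier $\binom nk$ is a detour, but correct. The paper gives no proof of this lemma. It cites Garloff--Wagner precisely for the strict part (simple nonzero roots), and that is where your argument breaks.

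The map $t\mapsto\binom nt$ is not a polynomial in $t$. Moreover, no $\phi$ with $\phi(k)=\binom nk$ for $0\le k\le n$ can vanish at $0,1,\dots,n-1$, since $\binom nk\neq0$ there; the polynomial with those zeros is $\binom tn$. Indeed $\prod_{j=0}^{n-1}(zD-j)=z^nD^n$, which sends $\sum_{k\le n}c_kz^k$ to $n!\,c_nz^n$. So the factorization is false. The Rolle bookkeeping built on it describes an operator that collapses everything to an $n$-fold zero at the origin. There is also a second problem: if $f(0)\neq0$ and $1\le j<\deg f$, the coefficients of $(zD-j)f$ change sign exactly once. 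Hence $zD-j$ creates a positive zero and leaves the class of polynomials with zeros in $(-\infty,0]$.

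\paragraph{The fallback does not close the gap.} Simple zeros can merge in a limit, so perturbing $p_1,p_2$ proves nothing about simplicity for the original product. Yet inputs with multiple roots are exactly what the paper needs, e.g.\ $(1+z)^n$ in \cref{lem:Croots}. The ``fact'' you invoke is also false. The polynomial $(1+z)^2(2+z)=2+5z+4z^2+z^3$ has a double root, yet $a_1^2=25>24=3a_0a_2$ and $a_2^2=16>15=3a_1a_3$. So both Newton inequalities, and a fortiori the minors $a_k^2-a_{k-1}a_{k+1}$, are strict. Equality in Newton's inequalities forces \emph{all} roots to coincide, whereas a single repeated root is detected by the discriminant.

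\paragraph{How the Rolle idea can be repaired.} It works if the zeros you peel off are negative. Write $\binom nk=\frac{(k+1)_{n-1}}{(n-1)!}\,\psi(k)$ with $\psi(t)=\frac{n!\,(n-1)!}{\Gamma(n+1-t)\,\Gamma(n+t)}$. This $\psi$ is a Laguerre--P\'olya function with no zeros in $[0,n]$. By Laguerre's theorem, multiplying the coefficients of your Schur--Szeg\H{o} composition by $\psi(k)$ keeps all zeros real and nonpositive. Then consider each operator $zD+i=z^{1-i}Dz^{i}$ with $1\le i\le n-1$. It maps polynomials with all zeros in $(-\infty,0]$ to polynomials of the same degree with the same property. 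It keeps the multiplicity at the origin and lowers every negative multiplicity by exactly one, and all new zeros are simple. This follows from Rolle's theorem on $(-\infty,0)$ for $|z|^{i}f$, which also tends to $0$ at $0^-$, together with a degree count. Negative multiplicities are at most $n$, so after these $n-1$ steps the result, which is $p_1\hp p_2$, has only simple negative zeros.
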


The non-strict version is classical \cite{Malo,Schur}; see also \cite{CC,Obr,Wagner}.

\section{Proof of the main theorem}\label{sec:proof}

Throughout this section, let $n\ge2$ and write
\[
n=2m+\varepsilon,\quad \varepsilon\in\{0,1\},\quad d=m-1,
\qquad\text{so that}\qquad
\left\lceil\frac n2\right\rceil-1=m-1+\varepsilon.
\]
Let $a_0,\dots,a_n$ be positive numbers such that
$p(z)=\sum_{k=0}^n a_kz^k$ has only real roots, which are therefore
negative, and define
\begin{equation}\label{eq:C}
C(z)=\sum_{k=0}^n c_kz^k,\qquad
c_k=\binom nk a_ka_{n-k};\qquad
\text{in particular, }c_0=a_0a_n.
\end{equation}
Finally, let $\Lop$ be the linear map from polynomials of degree at most
$n$ in $z$ to polynomials in $x$ determined by
\[
\Lop(z^k)=(x)_k(x)_{n-k}-(x+1)_k(x-1)_{n-k},\qquad 0\le k\le n.
\]
Then \eqref{eq:Pn} and \eqref{eq:C} give
\[
P_n=\Lop(C).
\]

\begin{lemma}\label[lemma]{lem:Croots}
$C$ has degree $n$, positive coefficients, and only simple negative roots.
\end{lemma}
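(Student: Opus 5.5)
We realize $C$ as a Hadamard product and then apply \cref{lem:hadamard}. Since $c_k=\binom nk a_ka_{n-k}$, we have
\[
C=p\hp \tilde p\hp B,\qquad \tilde p(z)=z^np(1/z)=\sum_{k=0}^n a_{n-k}z^k,\qquad B(z)=(1+z)^n=\sum_{k=0}^n\binom nk z^k.
\]
Each factor has positive coefficients, hence a positive leading coefficient. Each also has only real roots, which are negative. For $p$ this is the hypothesis, together with positivity of the coefficients. For $\tilde p$ the roots are the reciprocals $1/\rho$ of the roots $\rho$ of $p$; since $a_0>0$, zero is not a root of $p$, so these reciprocals are defined and negative. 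For $B$ the only root is $-1$.

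We then apply \cref{lem:hadamard} twice. The first application is to $p\hp\tilde p=\sum_k a_ka_{n-k}z^k$. This product is nonzero, has positive coefficients and degree $n$, and its constant term is $a_0a_n>0$. The lemma therefore says its roots are real, nonpositive and simple apart from a possible root at the origin. But $0$ is not a root, since the constant term is nonzero, so all its roots are real, negative and simple. The second application is to $(p\hp\tilde p)\hp B=C$. Again the hypotheses hold, since $p\hp\tilde p$ has positive leading coefficient and only negative roots. $C$ is nonzero with coefficients $c_k=\binom nk a_ka_{n-k}>0$, so $\deg C=n$ and $c_0=a_0a_n\neq0$. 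The lemma then gives that all roots of $C$ are real, nonpositive and simple except possibly at $0$, and $c_0\ne0$ excludes the origin. This establishes \cref{lem:Croots}.

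The only delicate point is the strict simplicity claim. It relies entirely on the strict form of the Garloff--Wagner theorem in \cref{lem:hadamard}, together with ruling out the exceptional root at $0$, which $c_0=a_0a_n>0$ handles. It is also worth checking the order of application. An alternative is to apply the lemma directly to $p\hp\tilde p$ and then to $\,\cdot\hp B$. Whichever order is used, only the final application needs to produce simplicity, since the lemma requires its inputs to have nonpositive roots but not simple ones. So no extra hypothesis on $p$, such as simple roots, is needed.
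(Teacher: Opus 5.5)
Your proposal is correct and is essentially the paper's proof. You write $C=p\odot\tilde p\odot(1+z)^n$ with $\tilde p(z)=z^np(1/z)$, apply the strict Hadamard product theorem (\cref{lem:hadamard}) twice, and use $c_0=a_0a_n>0$ to exclude the exceptional root at the origin; your remark that only the final application needs to deliver simplicity is accurate and matches how the paper uses the lemma.
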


\begin{proof}
All $c_k$ are positive and $c_n=a_na_0>0$. The polynomial $z^np(1/z)=\sum_ka_{n-k}z^k$ just reverses the coefficients of $p$; its roots are the reciprocals $1/r_i$ of the roots $r_i$ of $p$, so they are real and negative. By \cref{lem:hadamard}, the polynomial
\[
p(z)\hp z^np(1/z)=\sum_{k=0}^na_ka_{n-k}z^k
\]
has only nonpositive roots. Similarly $(1+z)^n$ has only negative roots, so by \cref{lem:hadamard} again, all roots of
\[
p(z)\hp z^np(1/z)\hp(1+z)^n=\sum_{k=0}^n\binom nka_ka_{n-k}z^k=C(z)
\]
are nonpositive and simple, except possibly at the origin. Since $C(0)=c_0>0$, the origin is not a root, so all roots of $C$ are negative and simple.
\end{proof}

We now rewrite $C$ in a very specific form, chosen so that $\Lop$ can be evaluated on it in closed form. First, $C$ is palindromic: by definition, $c_k=c_{n-k}$, that is,
\[C(z)=z^nC(1/z).\]
By \cref{lem:Croots}, the $n$ roots of $C$ are distinct negative numbers, and the leading coefficient of $C$ is $c_n=c_0$. If $C(-\tau)=0$ with $\tau>0$, then because $C$ is palindromic
\[
C(-1/\tau)=(-1/\tau)^nC(-\tau)=0.
\]
Hence the roots of $C$ different from $-1$ split into disjoint pairs $\{-s,-1/s\}$ with $s>0$, $s\neq1$.  The number of these roots is even, and $-1$ is a root at most once; since $C$ has exactly $n=2m+\varepsilon$ roots, $-1$ is a root precisely when $\varepsilon=1$, and there are exactly $m$ pairs. Writing them as $\{-s_j,-1/s_j\}$, $1\le j\le m$, with $s_j>0$ and $s_j\neq1$, we obtain
\begin{equation}\label{eq:pairs}
C(z)=c_0\,(z+1)^\varepsilon\prod_{j=1}^m(z+s_j)\Bigl(z+\frac1{s_j}\Bigr).
\end{equation}

\begin{lemma}\label{lem:pairing}
The numbers $b_j=s_j+\frac1{s_j}-2$, $1\le j\le m$, are positive and pairwise distinct, and
\begin{equation}\label{eq:pairs2}
C(z)=c_0\,(z+1)^\varepsilon\prod_{j=1}^m\Bigl((z+1)^2+b_jz\Bigr).
\end{equation}
\end{lemma}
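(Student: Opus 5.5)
The identity \eqref{eq:pairs2} comes straight out of \eqref{eq:pairs} by expanding each quadratic factor. For each $j$,
\[
(z+s_j)\Bigl(z+\frac1{s_j}\Bigr)=z^2+\Bigl(s_j+\frac1{s_j}\Bigr)z+1=(z+1)^2+\Bigl(s_j+\frac1{s_j}-2\Bigr)z=(z+1)^2+b_jz .
\]
Substituting this into \eqref{eq:pairs} gives \eqref{eq:pairs2} immediately. Everything else in the lemma concerns the numbers $b_j$ themselves.

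For positivity I would use AM--GM in the form
\[
s+\frac1s-2=\frac{(s-1)^2}{s}=\bigl(\sqrt s-1/\sqrt s\bigr)^2 .
\]
This is $\ge0$ for $s>0$, with equality only when $s=1$. Since $s_j>0$ and $s_j\neq1$ by construction, we get $b_j>0$.

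For distinctness, the key observation is that $\phi(s)=s+1/s$ is injective on the pairs $\{s,1/s\}$: $\phi(s)=\phi(t)$ with $s,t>0$ means $(s-t)(1-1/(st))=0$, so $t=s$ or $t=1/s$. Hence $b_i=b_j$ forces $\{-s_i,-1/s_i\}=\{-s_j,-1/s_j\}$. For $i\neq j$ these are two different pairs in the decomposition of the roots of $C$. Since those roots are simple by \cref{lem:Croots}, the pairs are disjoint, which is a contradiction. So the $b_j$ are pairwise distinct.

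The only step needing any care is this last one. The point is that distinctness of the $b_j$ is inherited from the simplicity of the roots of $C$, and therefore ultimately from the strict Hadamard product theorem (\cref{lem:hadamard}). One also has to check that the pairing was set up so that distinct indices correspond to disjoint root pairs; this holds because each root of $C$ appears exactly once.
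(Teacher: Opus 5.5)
Your proposal is correct and follows the paper's proof essentially step by step. You expand $(z+s_j)(z+1/s_j)=(z+1)^2+b_jz$, get $b_j>0$ from $s+1/s\ge2$ with equality only at $s=1$, and obtain distinctness from the fact that $s+1/s=t+1/t$ forces $t\in\{s,1/s\}$, so a repeated value would duplicate a root pair of $C$ and contradict the simplicity of its roots from \cref{lem:Croots}.
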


\begin{proof}
Expanding $(z+s_j)(z+\frac{1}{s_j})=(z+1)^2+b_jz$ in \eqref{eq:pairs} gives \eqref{eq:pairs2}, and positivity of $b_j$ follows from the arithmetic-geometric mean inequality on $s+1/s$. That is, $s_j+1/s_j\ge2$, with equality only for $s_j=1$; so $b_j>0$. Finally, if $b_j=b_k$ for some $j\neq k$, then $s_k+1/s_k=s_j+1/s_j$, which forces $s_k\in\{s_j,1/s_j\}$. Therefore the pairs $\{-s_j,-1/s_j\}$ and $\{-s_k,-1/s_k\}$ coincide and $C$ would have a double root, contradicting \cref{lem:Croots}. Thus the numbers $b_j$ are pairwise distinct.
\end{proof}

Now we introduce the polynomial
\[g(t)=\prod_{j=1}^m(1+b_jt)=\sum_{j=0}^mg_jt^j,\]
which for the moment serves to collect coefficients, but will be used once more in \cref{sec:h}. Each $g_j$ is the $j$-th elementary symmetric polynomial in $b_1,\dots,b_m$. In terms of $g$, $C$ becomes
\begin{equation}\label{eq:Cg}
C(z)=c_0\sum_{j=0}^mg_j\,z^j(z+1)^{n-2j}.
\end{equation}
Indeed, expand the product in \eqref{eq:pairs2} by choosing, in each factor, either $(z+1)^2$ or $b_jz$. The terms in which $b_jz$ is chosen for exactly $j$ of the $m$ factors add up to $g_j\,z^j(z+1)^{2(m-j)}$, and multiplying by the front factor $(z+1)^\varepsilon$ turns the exponent into $2(m-j)+\varepsilon=n-2j$.

\begin{remark}\label{rem:folding}
Since $z^j(z+1)^{n-2j}=(z+1)^n\bigl(z/(z+1)^2\bigr)^j$, identity \eqref{eq:Cg} can be written as
\[
C(z)=c_0\,(z+1)^n\,g\!\left(\frac{z}{(z+1)^2}\right).
\]
The substitution $t=z/(z+1)^2$ sends both roots $-s_j$ and $-1/s_j$ of $C$ to the same point $-1/(s_j+s_j^{-1}-2)=-1/b_j$, which is the root of the factor $1+b_jt$ of $g$; see \cref{fig:folding}. On $(-\infty,0)\setminus\{-1\}$ this substitution is two-to-one onto $(-\infty,0)$, and the root $-1$ of $C$ (present exactly when $n$ is odd) corresponds to the front factor $(z+1)^\varepsilon$ in \eqref{eq:pairs}.
\end{remark}

\begin{figure}[t]
\centering
\begin{tikzpicture}
\begin{axis}[
  width=0.82\linewidth, height=6.4cm,
  xmin=-5.3, xmax=0.45, ymin=-3.1, ymax=0.45,
  axis lines=middle, axis line style={black!60},
  xlabel={$z$}, ylabel={$t=z/(z+1)^2$},
  xlabel style={at={(ticklabel* cs:1)},anchor=north west},
  ylabel style={at={(ticklabel* cs:1)},anchor=south east},
  xtick={-5,-4,-3,-2,-1}, ytick={-3,-1},
  ticklabel style={font=\scriptsize},
  samples=160, clip=true]
\addplot[black!80, thick, domain=-5.3:-1.76] {x/(1+x)^2};
\addplot[black!80, thick, domain=-0.57:0] {x/(1+x)^2};
\draw[black!40, densely dotted] (axis cs:-1,-3.1) -- (axis cs:-1,0.45);
\draw[blue!60!black, dashed] (axis cs:-3,0) -- (axis cs:-3,-0.75) -- (axis cs:0,-0.75);
\draw[blue!60!black, dashed] (axis cs:-0.3333,0) -- (axis cs:-0.3333,-0.75);
\draw[green!45!black, dashed] (axis cs:-2,0) -- (axis cs:-2,-2) -- (axis cs:0,-2);
\draw[green!45!black, dashed] (axis cs:-0.5,0) -- (axis cs:-0.5,-2);
\addplot[only marks, mark=*, mark size=2.2pt, blue!60!black] coordinates {(-3,0) (-0.3333,0)};
\addplot[only marks, mark=*, mark size=2.2pt, green!45!black] coordinates {(-2,0) (-0.5,0)};
\addplot[only marks, mark=diamond*, mark size=3.2pt, blue!60!black] coordinates {(0,-0.75)};
\addplot[only marks, mark=diamond*, mark size=3.2pt, green!45!black] coordinates {(0,-2)};
\node[font=\scriptsize, blue!60!black, anchor=west] at (axis cs:0.03,-0.75) {$-\tfrac34$};
\node[font=\scriptsize, green!45!black, anchor=west] at (axis cs:0.03,-2) {$-2$};
\node[font=\scriptsize, black!60, anchor=south west] at (axis cs:-1,0.05) {$z=-1$};
\end{axis}
\end{tikzpicture}
\caption{The substitution $t=z/(z+1)^2$ on the negative axis. The pairs of roots $\{-s_1,-1/s_1\}=\{-3,-\tfrac13\}$ and $\{-s_2,-1/s_2\}=\{-2,-\tfrac12\}$ of $C$ (dots on the $z$-axis) are sent to the roots $-1/b_1=-\tfrac34$ and $-1/b_2=-2$ of $g$ (diamonds on the $t$-axis), where $b_j=s_j+s_j^{-1}-2$.}
\label{fig:folding}
\end{figure}

We now evaluate $\Lop$ on $C$. To do so, we introduce a second linear map $\Fop$. For a polynomial $\varphi(z)=\sum_{k=0}^n\varphi_kz^k$ and indeterminates $u,v$ put
\begin{equation}\label{eq:F}
\Fop(\varphi;u,v)=\sum_{k=0}^n\varphi_k\,(u)_k\,(v)_{n-k},\qquad\text{so that}\qquad \Lop(\varphi)=\Fop(\varphi;x,x)-\Fop(\varphi;x+1,x-1).
\end{equation}
By linearity and \eqref{eq:Cg}, it suffices to compute $\Fop$ on each piece $z^j(z+1)^{n-2j}$, for arbitrary $u,v$. Afterwards we substitute $(u,v)=(x,x)$ and $(u,v)=(x+1,x-1)$. This will give us $\Lop(C)$.

\begin{lemma}\label{lem:F}
For $0\le j\le m$,
\[
\Fop\bigl(z^j(z+1)^{n-2j};u,v\bigr)=(u)_j\,(v)_j\,(u+v+2j)_{n-2j}.
\]
\end{lemma}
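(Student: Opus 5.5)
We will expand $(z+1)^{n-2j}$ binomially, apply $\Fop$ termwise, and then collapse the resulting sum with \cref{lem:split} and \cref{lem:vandermonde}. Write $N=n-2j\ge0$. Then
\[
z^j(z+1)^{N}=\sum_{\ell=0}^{N}\binom N\ell z^{j+\ell},
\]
so by the definition \eqref{eq:F}, with $n-(j+\ell)=j+(N-\ell)$,
\[
\Fop\bigl(z^j(z+1)^{N};u,v\bigr)=\sum_{\ell=0}^{N}\binom N\ell\,(u)_{j+\ell}\,(v)_{j+N-\ell}.
\]

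Next we factor out the common part of each rising factorial. \Cref{lem:split} gives $(u)_{j+\ell}=(u)_j\,(u+j)_\ell$ and $(v)_{j+N-\ell}=(v)_j\,(v+j)_{N-\ell}$. Pulling $(u)_j(v)_j$ out of the sum leaves
\[
(u)_j(v)_j\sum_{\ell=0}^{N}\binom N\ell (u+j)_\ell\,(v+j)_{N-\ell}.
\]
By \cref{lem:vandermonde} applied with $u+j$ and $v+j$ in place of $u,v$, the remaining sum equals $(u+v+2j)_N$. Since $N=n-2j$, this is exactly the claimed formula.

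The computation is mechanical, and the only point needing care is the index bookkeeping. We must check that the exponents $j+\ell$ stay in the range $0\le k\le n$, which they do because $j\le m$ forces $N\ge0$. We must also check that $n-k$ really splits as $j+(N-\ell)$, so that both factorials share the same prefix length $j$. Once that symmetric split is arranged, the identity follows directly from the two lemmas.
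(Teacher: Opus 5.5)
Your proof is correct and follows the paper's argument step for step. You expand $(z+1)^{n-2j}$ binomially, use the splitting identity to pull out $(u)_j(v)_j$, and then apply the Vandermonde identity with $N=n-2j$ to $u+j$ and $v+j$.
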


\begin{proof}
Since $z^j(z+1)^{n-2j}=\sum_{\ell=0}^{n-2j}\binom{n-2j}{\ell}z^{j+\ell}$, we have
\begin{align*}
\Fop\bigl(z^j(z+1)^{n-2j};u,v\bigr)&=\sum_{\ell=0}^{n-2j}\binom{n-2j}{\ell}(u)_{j+\ell}(v)_{n-j-\ell}\\
&=(u)_j(v)_j\sum_{\ell=0}^{n-2j}\binom{n-2j}{\ell}(u+j)_\ell(v+j)_{n-2j-\ell}\\
&=(u)_j(v)_j(u+v+2j)_{n-2j}.
\end{align*}
The second equality pulls out a common factor using the splitting identity from \cref{lem:split}, using the forms $(u)_{j+\ell}=(u)_j(u+j)_\ell$ and $(v)_{n-j-\ell}=(v)_j(v+j)_{n-2j-\ell}$; the third is \cref{lem:vandermonde} with $N=n-2j$.
\end{proof}

Since $u+v=2x$ for both substitutions, \cref{lem:F} gives
\begin{equation}\label{eq:Fsub}
\begin{aligned}
\Fop\bigl(z^j(z+1)^{n-2j};x,x\bigr)&=(x)_j^2\,(2x+2j)_{n-2j},\\
\Fop\bigl(z^j(z+1)^{n-2j};x+1,x-1\bigr)&=(x+1)_j(x-1)_j\,(2x+2j)_{n-2j}.
\end{aligned}
\end{equation}

\subsection{A new formula for \texorpdfstring{$P_n$}{P\_n}}\label{sec:formula}

Using $\Fop$ on each $z^j(z+1)^{n-2j}$ and linearity, we can construct $P_n=\Lop(C)$ in a form that allows us to break it into more approachable pieces. By linearity, \eqref{eq:Cg}, \eqref{eq:F} and \eqref{eq:Fsub},
\begin{equation}\label{eq:P1}
\begin{aligned}
P_n(x)=\Lop(C)&=c_0\sum_{j=0}^mg_j\,\Lop\bigl(z^j(z+1)^{n-2j}\bigr)\\
&=c_0\sum_{j=0}^mg_j\,(2x+2j)_{n-2j}\Bigl[(x)_j^2-(x+1)_j(x-1)_j\Bigr].
\end{aligned}
\end{equation}
The bracket vanishes for $j=0$. For $j\ge1$ its two terms share the large factor $(x)_{j-1}(x+1)_{j-1}$: using $(x)_j=x\,(x+1)_{j-1}=(x)_{j-1}(x+j-1)$, alongside $(x+1)_j=(x+1)_{j-1}(x+j)$ and $(x-1)_j=(x-1)(x)_{j-1}$, we get
\[
(x)_j^2-(x+1)_j(x-1)_j=(x)_{j-1}(x+1)_{j-1}\Bigl[x(x+j-1)-(x+j)(x-1)\Bigr]=j\,(x)_{j-1}(x+1)_{j-1}.
\]
Hence\footnote{The factor $jg_j$ suggests a derivative; this is made precise in \cref{sec:h}.}
\begin{equation}\label{eq:P2}
P_n(x)=c_0\sum_{j=1}^mj\,g_j\,(2x+2j)_{n-2j}\,(x)_{j-1}(x+1)_{j-1}.
\end{equation}

Next, we pull the factors of $2$ out of $(2x+2j)_{n-2j}$. The result rises in half-steps, so we split it into two threads, one starting at $x+j$ and the other at $x+j+\half$. Precisely, since $n-2j=2(m-j)+\varepsilon$,
\[
(2x+2j)_{n-2j}=\prod_{i=0}^{n-2j-1}(2x+2j+i)=2^{n-2j}\prod_{i=0}^{n-2j-1}\Bigl(x+j+\frac i2\Bigr),
\]
and the even indices $i=2l$ ($0\le l\le m-j+\varepsilon-1$) and odd indices $i=2l+1$ ($0\le l\le m-j-1$) give
\[(2x+2j)_{n-2j}=2^{n-2j}\,(x+j)_{m-j+\varepsilon}\,\bigl(x+j+\half\bigr)_{m-j}.\]
The thread $(x+j)_{m-j+\varepsilon}$ lines up with the factor $(x+1)_{j-1}$ in \eqref{eq:P2}: by \cref{lem:split}, $(x+1)_{j-1}(x+j)_{m-j+\varepsilon}=(x+1)_{m-1+\varepsilon}$, which no longer depends on $j$. Therefore
\[
P_n(x)=c_0\,(x+1)_{m-1+\varepsilon}\sum_{j=1}^m2^{n-2j}\,j\,g_j\,(x)_{j-1}\bigl(x+j+\half\bigr)_{m-j}.
\]
Finally we put $r=j-1$ and $d=m-1$, and write $2^{n-2j}=2^{n-2}\,4^{-r}$. This gives
\begin{equation}\label{eq:R0}
P_n(x)=c_0\,2^{n-2}\,(x+1)_{m-1+\varepsilon}\,R(x),\qquad
R(x)=\sum_{r=0}^d\frac{(r+1)\,g_{r+1}}{4^r}\,(x)_r\,\bigl(x+r+\tfrac32\bigr)_{d-r}.
\end{equation}
We have separated $P_n$ into two parts. The first factor $(x+1)_{m-1+\varepsilon}$ has the negative roots $-1,-2,\dots,-(m-1+\varepsilon)$. It remains to show that $R$ has only negative real roots.

\subsection{The polynomial \texorpdfstring{$h$}{h}}\label{sec:h}
We return to $g(t)=\prod_{j=1}^m(1+b_jt)$. Its roots are the numbers $-1/b_j$, which by \cref{lem:pairing} are $m$ distinct negative numbers $\theta_1<\dots<\theta_m<0$. By Rolle's theorem, $g'$ has a root in each interval $(\theta_i,\theta_{i+1})$, $1\le i\le m-1$. These are $m-1=d$ distinct negative roots, and $g'$ has degree exactly $m-1$ (its leading coefficient is $m\,b_1\cdots b_m\neq0$), so they are all the roots of $g'$. Rescaling the variable by a positive factor does not change the signs of the roots, so
\begin{equation}\label{eq:h}
h(t)\coloneqq g'\Bigl(\frac t4\Bigr)=\sum_{r=0}^dh_rt^r,\qquad h_r=\frac{(r+1)\,g_{r+1}}{4^r},
\end{equation}
has $d$ distinct negative roots as well. Comparing with \eqref{eq:R0}, we see that
\begin{equation}\label{eq:Rh}
R(x)=\sum_{r=0}^dh_r\,(x)_r\,\bigl(x+r+\tfrac32\bigr)_{d-r}.
\end{equation}
Since $h(0)=g_1=b_1+\dots+b_m>0$ and the roots of $h$ are distinct negative numbers $-1/u_1,\dots,-1/u_d$, we can write
\begin{equation}\label{eq:hfactor}
h(t)=h(0)\prod_{j=1}^d(1+u_jt),\qquad u_1,\dots,u_d>0\ \text{distinct}.
\end{equation}
If $d=0$, then $R=h(0)>0$ is a positive constant. From now on we assume $d\ge1$.

\subsection{The recurrence}\label{sec:recurrence}
For $0\le k\le d$ let $f_k(t)$ be the product of the first $k$ factors of the product \eqref{eq:hfactor}, and define $Q_k$ accordingly as follows:
\begin{equation}\label{eq:Qk}
f_k(t)=\prod_{j=1}^k(1+u_jt)=\sum_{r=0}^kf_{k,r}\,t^r,\qquad
Q_k(x)=\sum_{r=0}^kf_{k,r}\,(x)_r\,\bigl(x+r+\tfrac32\bigr)_{k-r}.
\end{equation}
Then $f_0=Q_0=1$, $h=h(0)f_d$, and by \eqref{eq:Rh}
\begin{equation}\label{eq:RQ}
R(x)=h(0)\,Q_d(x).
\end{equation}

\begin{lemma}[Recurrence]\label{lem:recurrence}
For $1\le k\le d$,
\begin{equation}\label{eq:Qrec}
Q_k(x)=\bigl(x+k+\half\bigr)\,Q_{k-1}(x)+x\,u_k\,Q_{k-1}(x+1).
\end{equation}
\end{lemma}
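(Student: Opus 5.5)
Since $f_k(t)=(1+u_kt)f_{k-1}(t)$, the coefficients satisfy $f_{k,r}=f_{k-1,r}+u_kf_{k-1,r-1}$, with the conventions $f_{k-1,-1}=f_{k-1,k}=0$. Inserting this into the definition \eqref{eq:Qk} of $Q_k$ splits $Q_k$ into two sums. The plan is to identify each sum with one term on the right of \eqref{eq:Qrec}, using only the splitting identity of \cref{lem:split}.

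\emph{First sum.} This is
\[
\sum_{r=0}^{k-1}f_{k-1,r}\,(x)_r\bigl(x+r+\tfrac32\bigr)_{k-r}.
\]
By \cref{lem:split}, $\bigl(x+r+\tfrac32\bigr)_{k-r}=\bigl(x+r+\tfrac32\bigr)_{k-1-r}\bigl(x+k+\tfrac12\bigr)$, because the last factor of the rising factorial is $x+r+\tfrac32+(k-r-1)=x+k+\tfrac12$. This factor does not depend on $r$, so it pulls out, and the sum equals $\bigl(x+k+\tfrac12\bigr)Q_{k-1}(x)$.

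\emph{Second sum.} This is
\[
u_k\sum_{r=1}^{k}f_{k-1,r-1}\,(x)_r\bigl(x+r+\tfrac32\bigr)_{k-r}.
\]
Put $s=r-1$. Then $(x)_{s+1}=x\,(x+1)_s$, and
\[
\bigl(x+s+1+\tfrac32\bigr)_{k-1-s}=\bigl((x+1)+s+\tfrac32\bigr)_{(k-1)-s}.
\]
The sum therefore becomes
\[
x\,u_k\sum_{s=0}^{k-1}f_{k-1,s}\,(x+1)_s\bigl((x+1)+s+\tfrac32\bigr)_{k-1-s}=x\,u_k\,Q_{k-1}(x+1).
\]

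Adding the two pieces gives \eqref{eq:Qrec}. The only step that needs care is the index bookkeeping in the second sum: the shift $x\mapsto x+1$ has to be matched simultaneously in the Pochhammer factor $(x)_r$ and in the offset $r+\tfrac32$. With the substitution $s=r-1$ both match exactly, and the boundary terms $r=0$ and $r=k$ vanish by the zero conventions above.
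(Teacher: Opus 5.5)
Your proof is correct and is essentially the paper's own argument. Both split $f_k=(1+u_kt)f_{k-1}$ into two sums, pull the $r$-independent last factor $x+k+\tfrac12$ out of the first sum, and shift the index in the second sum using $(x)_{s+1}=x\,(x+1)_s$ so that it becomes $x\,u_k\,Q_{k-1}(x+1)$.
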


\begin{proof}
Since $f_k(t)=(1+u_kt)f_{k-1}(t)=f_{k-1}(t)+u_k\,t\,f_{k-1}(t)$, we have
\[
f_k(t)=\sum_{r=0}^{k-1}f_{k-1,r}\,t^r+u_k\sum_{r=0}^{k-1}f_{k-1,r}\,t^{r+1},
\]
and therefore, by the definition of $Q_k$,
\[
Q_k(x)=\sum_{r=0}^{k-1}f_{k-1,r}\,(x)_r\bigl(x+r+\tfrac32\bigr)_{k-r}+u_k\sum_{r=0}^{k-1}f_{k-1,r}\,(x)_{r+1}\bigl(x+r+\tfrac52\bigr)_{k-r-1}.
\]
For each $(x+r+\tfrac32)_{k-r}$ in the first sum we remove the final factor \[x+r+\tfrac32+(k-r-1)=x+k+\half.\]
In the second sum we write \[(x)_{r+1}=x\,(x+1)_r \quad\text{and}\quad x+r+\tfrac52=(x+1)+r+\tfrac32.\] This yields
\[
Q_k(x)=\bigl(x+k+\half\bigr)\sum_{r=0}^{k-1}f_{k-1,r}\,(x)_r\bigl(x+r+\tfrac32\bigr)_{k-r-1}
+u_k\,x\sum_{r=0}^{k-1}f_{k-1,r}\,(x+1)_r\bigl((x+1)+r+\tfrac32\bigr)_{k-r-1}.\]
The first sum is $Q_{k-1}(x)$ and the second sum is $Q_{k-1}(x+1)$.
\end{proof}

\subsection{The induction}\label{sec:induction}
We use the following elementary fact: if a real polynomial has positive leading coefficient and only simple real roots, then at any point $\xi$ which is not a root its sign is $(-1)^N$, where $N$ is the number of roots greater than $\xi$.

\begin{lemma}\label{lem:induction}
For every $1\le k\le d$, the polynomial $Q_k$ has degree $k$ and positive leading coefficient $f_k(1)=\prod_{j=1}^k(1+u_j)$, and it has $k$ simple real roots, which lie in the interval $(-k-\half,0)$ and are pairwise more than $1$ apart.
\end{lemma}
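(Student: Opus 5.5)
The plan is induction on $k$, driven by the recurrence of \cref{lem:recurrence}. The roots of $Q_k$ will be located by sign changes at two families of test points: the roots $\rho_i$ of $Q_{k-1}$, and their left shifts $\rho_i-1$, which are the roots of $Q_{k-1}(x+1)$. We also use the two endpoints $x=0$ and $x=-k-\half$. Throughout, the sign rule stated before the lemma converts root counts into signs.

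\emph{Degree and leading coefficient.} Both $\bigl(x+k+\half\bigr)Q_{k-1}(x)$ and $x\,u_kQ_{k-1}(x+1)$ have degree $k$. Their leading coefficients are $f_{k-1}(1)$ and $u_kf_{k-1}(1)$, so $Q_k$ has degree $k$ and leading coefficient $(1+u_k)f_{k-1}(1)=f_k(1)>0$. The base case is $Q_0=1$, with leading coefficient $f_0(1)=1$. For $k=1$, $Q_1(x)=(1+u_1)x+\tfrac32$, whose single root $-\tfrac{3}{2(1+u_1)}$ lies in $(-\tfrac32,0)$. I will nevertheless run the general step below, which also covers $k=1$ using only the endpoint values.

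\emph{Sign evaluations.} Assume $Q_{k-1}$ has simple roots $\rho_1<\dots<\rho_{k-1}$ in $(-k+\half,0)$ with $\rho_{i+1}-\rho_i>1$. Then $Q_{k-1}(\xi)$ has sign $(-1)^{N}$, where $N$ is the number of roots exceeding $\xi$. The four evaluations are as follows.
\begin{itemize}[label=\textendash]
\item At $x=0$: $Q_k(0)=\bigl(k+\half\bigr)Q_{k-1}(0)>0$, because all roots of $Q_{k-1}$ are negative.
\item At $x=\rho_i$: $Q_k(\rho_i)=\rho_i u_kQ_{k-1}(\rho_i+1)$. Since $\rho_{i+1}>\rho_i+1$ strictly (so $\rho_i+1$ is not a root), exactly $k-1-i$ roots exceed $\rho_i+1$. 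With $\rho_i<0$ this gives $\sgn Q_k(\rho_i)=(-1)^{k-i}$.
\item At $x=\rho_i-1$: $Q_k(\rho_i-1)=(\rho_i+k-\half)\,Q_{k-1}(\rho_i-1)$. The first factor is positive because $\rho_i>-k+\half$. Since $\rho_{i-1}<\rho_i-1$, exactly $k-i$ roots exceed $\rho_i-1$, so $\sgn Q_k(\rho_i-1)=(-1)^{k-i}$.
\item At $x=-k-\half$: $Q_k(-k-\half)=(-k-\half)u_kQ_{k-1}(-k+\half)$, and all $k-1$ roots of $Q_{k-1}$ exceed $-k+\half$. Hence $\sgn Q_k(-k-\half)=(-1)^k$.
\end{itemize}

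\emph{Locating the roots.} Comparing these signs gives a root of $Q_k$ in each of the $k$ disjoint intervals
\[
\bigl(-k-\half,\ \rho_1-1\bigr),\qquad \bigl(\rho_i,\ \rho_{i+1}-1\bigr)\ (1\le i\le k-2),\qquad \bigl(\rho_{k-1},\ 0\bigr).
\]
These intervals are nonempty precisely because the gaps between consecutive $\rho_i$ exceed $1$. When $k=1$ the list collapses to the single interval $(-\tfrac32,0)$. Since $\deg Q_k=k$, these are all the roots; they are simple and lie in $(-k-\half,0)$.

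\emph{Separation.} Consecutive roots are separated by a pair of test points $\rho_i-1<\rho_i$: one root lies below $\rho_i-1$ and the next lies above $\rho_i$. Hence consecutive roots are more than $1$ apart, which closes the induction.

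The main obstacle is choosing test points at which each of the two terms of the recurrence vanishes in turn. The subtle point is that this requires the strict gap $>1$ for $Q_{k-1}$, both to make the sign counts exact and to keep the intervals disjoint. That is why the separation property must be carried along as part of the induction hypothesis.
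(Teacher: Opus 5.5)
Your proof is correct and is essentially the paper's argument: your test points $-k-\tfrac12$, $\rho_i$, $\rho_i-1$, $0$ are exactly the roots $\alpha_i,\beta_i$ of the two summands $A(x)=(x+k+\tfrac12)Q_{k-1}(x)$ and $u_kB(x)=u_k\,x\,Q_{k-1}(x+1)$ of the recurrence. Your sign computations, the intervals $(\alpha_i,\beta_i)$, and the unit-gap separation match the paper's. The differences are cosmetic. You obtain the leading coefficient $f_k(1)$ from the recurrence rather than from the monic products in the definition of $Q_k$, and you absorb $k=1$ into the general step via $Q_0=1$. One small correction to your commentary: nonemptiness of the first and last intervals comes from $\rho_1>-k+\tfrac12$ and $\rho_{k-1}<0$, not from the gap condition, and disjointness only needs $\rho_i-1<\rho_i$.
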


\begin{proof}
Each product $(x)_r(x+r+\frac32)_{k-r}$ in \eqref{eq:Qk} is monic of degree $k$, so the coefficient of $x^k$ in $Q_k$ is $\sum_rf_{k,r}=f_k(1)>0$. We prove the statement about the roots by induction on $k$.

For $k=1$ we have $Q_1(x)=(1+u_1)x+\tfrac32$, whose only root $-\tfrac{3}{2(1+u_1)}$ lies in $(-\tfrac32,0)$.

Now let $2\le k\le d$ and suppose the statement holds for $Q_{k-1}$, with roots
\[
\rho_1<\rho_2<\dots<\rho_{k-1},\qquad -(k-1)-\half<\rho_1,\quad \rho_{k-1}<0,\quad \rho_{i+1}-\rho_i>1 .
\]
Write the two terms of the recurrence \eqref{eq:Qrec} as
\[
A(x)=\bigl(x+k+\half\bigr)Q_{k-1}(x),\qquad B(x)=x\,Q_{k-1}(x+1),\qquad\text{so that}\qquad Q_k=A+u_kB.
\]
Both have degree $k$ and the positive leading coefficient $f_{k-1}(1)$. To avoid a clash with the coefficients $a_k$ and the numbers $b_j$, we denote the roots of $A$ by $\alpha_i$ and the roots of $B$ by $\beta_i$.
\[
\alpha_1=-k-\half,\quad \alpha_{i+1}=\rho_i\ \ (1\le i\le k-1);\qquad
\beta_i=\rho_i-1\ \ (1\le i\le k-1),\quad \beta_k=0 .
\]
The roots of $A$ and $B$ alternate:
\begin{equation}\label{eq:alternate}
\alpha_1<\beta_1<\alpha_2<\beta_2<\dots<\alpha_k<\beta_k.
\end{equation}
Indeed, the leftmost inequality is the bound $\rho_1>-(k-1)-\half$ on the smallest old root, the inequalities $\rho_i-1<\rho_i$ are trivial, the inequalities $\rho_i<\rho_{i+1}-1$ are the gaps greater than one, and the rightmost inequality is $\rho_{k-1}<0$. In particular $A$ and $B$ have simple real roots and no common root.

At the points $\alpha_i$ and $\beta_i$ we have
\[
Q_k(\alpha_i)=u_k\,B(\alpha_i),\qquad Q_k(\beta_i)=A(\beta_i).
\]
These values are nonzero and have opposite signs. Because $B$ has $k-i+1$ roots to the right of $\alpha_i$ (namely $\beta_i,\dots,\beta_k$), and $A$ has $k-i$ roots to the right of $\beta_i$ (namely $\alpha_{i+1},\dots,\alpha_k$), by \eqref{eq:alternate},
\[
\sgn Q_k(\alpha_i)=(-1)^{k-i+1},\qquad \sgn Q_k(\beta_i)=(-1)^{k-i}.
\]
The intermediate value theorem therefore gives a root $\sigma_i$ of $Q_k$ in each of the intervals
\[
(\alpha_i,\beta_i):\qquad \bigl(-k-\half,\ \rho_1-1\bigr),\quad(\rho_1,\ \rho_2-1),\quad\dots,\quad(\rho_{k-1},\ 0).
\]
There are $k$ such intervals and they are pairwise disjoint. Since $Q_k$ has degree $k$, the numbers $\sigma_1<\dots<\sigma_k$ are all the roots of $Q_k$, and each is simple. Notably, they all lie in $(-k-\half,0)$. Consecutive intervals are separated by the gaps $[\beta_i,\alpha_{i+1}]=[\rho_i-1,\rho_i]$ of length exactly one, so
\[
\sigma_{i+1}-\sigma_i>\alpha_{i+1}-\beta_i=1 .
\]
This completes the induction.
\end{proof}

\begin{figure}[t]
\centering
\begin{tikzpicture}[x=2.85cm,y=1cm,
  Adot/.style={circle,fill=black,inner sep=1.5pt},
  Bdot/.style={circle,draw=black,fill=white,inner sep=1.5pt},
  newz/.style={cross out,draw=red!70!black,thick,inner sep=2.2pt}]
\def\ao{-4.5}\def\bo{-4.2430}\def\at{-3.2430}\def\bt{-2.75}
\def\ah{-1.75}\def\bh{-1.2570}\def\af{-0.2570}\def\bff{0}
\foreach \a/\b in {\ao/\bo,\at/\bt,\ah/\bh,\af/\bff}
  \fill[red!12] (\a,-0.18) rectangle (\b,0.18);
\draw[->,thick,black!70] (-4.72,0) -- (0.3,0) node[right]{$x$};
\foreach \a/\b in {\bo/\at,\bt/\ah,\bh/\af}
  \draw[decorate,decoration={brace,mirror,amplitude=4pt},black!60] (\a,-0.62) -- node[below=3pt,font=\scriptsize]{length $1$} (\b,-0.62);
\foreach \p/\l/\s in {\ao/1/+,\at/2/-,\ah/3/+,\af/4/-}
  {\node[Adot] at (\p,0) {}; \node[above=3pt,font=\small] at (\p,0) {$\alpha_{\l}$};
   \node[font=\scriptsize,text=blue!60!black] at (\p,0.95) {$\s$};}
\foreach \p/\l/\s in {\bo/1/-,\bt/2/+,\bh/3/-,\bff/4/+}
  {\node[Bdot] at (\p,0) {}; \node[below=3pt,font=\small] at (\p,0) {$\beta_{\l}$};
   \node[font=\scriptsize,text=blue!60!black] at (\p,0.95) {$\s$};}
\foreach \s in {-4.2818,-2.8479,-1.3991,-0.0962} \node[newz] at (\s,0) {};
\node[font=\scriptsize,text=blue!60!black,anchor=east] at (-4.62,0.95) {$\sgn Q_4$:};
\end{tikzpicture}
\caption{The inductive step of \cref{lem:induction} for $k=4$.}
\label{fig:interlace}
\end{figure}

\subsection{Conclusion}\label{sec:conclusion}

\begin{proof}[Proof of \cref{thm:main}]
By \eqref{eq:R0} and \eqref{eq:RQ}, $P_n(x)=c_0\,2^{n-2}\,(x+1)_{m-1+\varepsilon}\,R(x)$ with $R=h(0)\,Q_d$.
If $d=0$, then $R=h(0)$ is a positive constant (\cref{sec:h}). If $d\ge1$, then by \cref{lem:induction} with $k=d$, the polynomial $R=h(0)Q_d$ has degree $d$, leading coefficient $h(0)f_d(1)=h(1)>0$, and $d$ simple real roots in $(-d-\half,0)=(-(m-\half),0)$ that are pairwise more than one unit apart. This proves (b).

Part (a) follows, since $\deg P_n=(m-1+\varepsilon)+(m-1)=n-2$ and the leading coefficient of $P_n$ is $c_0\,2^{n-2}\,h(1)>0$. For (c), the roots of $(x+1)_{m-1+\varepsilon}$ are $-1,\dots,-(m-1+\varepsilon)$, and both $m-1+\varepsilon$ and $m-\half$ are at most $\frac{n-1}{2}=m+\frac{\varepsilon-1}{2}$. Hence all roots of $P_n$ lie in $[-\frac{n-1}{2},0)$. For odd $n$ the endpoint $-\frac{n-1}{2}=-m$ is the last root of $(x+1)_m$.

\end{proof}

\begin{proof}[Proof of \cref{cor:PF}]
By \cref{thm:main}, $P_n(x)=\kappa\prod_{i=1}^{n-2}(x+\theta_i)$ with $\kappa>0$ and $\theta_i>0$, so the coefficients of $P_n$ are $\kappa$ times elementary symmetric functions of the $\theta_i$ and are positive. By \cite{ASW,Ed}, the coefficient sequence is $\PF$. Finally, the roots of $Q_n^{1,1}(y)=P_n(y+1)$ are the numbers $\theta-1$, where $\theta\in[-\frac{n-1}2,0)$ runs over the roots of $P_n$.
\end{proof}

\begin{remark}[Low degrees]
Comparing the coefficients of $z$ on both sides of \eqref{eq:Cg} gives $c_1=c_0(n+g_1)$. Equivalently,
\begin{equation}\label{eq:g1}
c_0\,g_1=n\bigl(a_1a_{n-1}-a_0a_n\bigr).
\end{equation}
Evaluating \eqref{eq:R0} at $x=0$, where only the term $r=0$ of $R$ survives, and using $2^{n-2}(m-1+\varepsilon)!\,(\tfrac32)_{m-1}=(n-1)!$, one recovers Karp's formula $P_n(0)=n!\,(a_1a_{n-1}-a_0a_n)$ \cite[(14)]{Ka12}.

\Cref{tab:low} lists $P_n$ for $2\le n\le5$. In this range $m\le2$, so $R$ is either the constant $g_1$ or, for $n=4,5$, the linear polynomial $g_1(x+\tfrac32)+\tfrac12g_2x$. To pass to the coefficients of $p$, put $\Delta_k=a_ka_{n-k}-a_0a_n$, so that $c_k=\binom nk(c_0+\Delta_k)$. Then \eqref{eq:g1} reads $c_0g_1=n\Delta_1$, and comparing the coefficients of $z^2$ in \eqref{eq:Cg} gives $c_0g_2=\binom n2\Delta_2-n(n-2)\Delta_1$ for $n\ge4$. All entries were checked by exact symbolic computation.

\begin{table}[ht]
\centering
\renewcommand{\arraystretch}{1.35}
\small
\begin{tabular}{@{}c l l@{}}
\toprule
$n$ & $P_n(x)$ in terms of $g_1,g_2$ & $P_n(x)$ in terms of $\Delta_1,\Delta_2$ \\
\midrule
$2$ & $c_0g_1$ & $2\Delta_1$ \\
$3$ & $2c_0g_1\,(x+1)$ & $6\Delta_1\,(x+1)$ \\
$4$ & $4c_0\,(x+1)\bigl[g_1(x+\tfrac32)+\tfrac12g_2x\bigr]$ & $12\,(x+1)\bigl[\Delta_2\,x+2\Delta_1\bigr]$ \\
$5$ & $8c_0\,(x+1)(x+2)\bigl[g_1(x+\tfrac32)+\tfrac12g_2x\bigr]$ & $20\,(x+1)(x+2)\bigl[(2\Delta_2-\Delta_1)\,x+3\Delta_1\bigr]$ \\
\bottomrule
\end{tabular}
\caption{The polynomials $P_n$ for small $n$, where $\Delta_k=a_ka_{n-k}-a_0a_n$.}
\label{tab:low}
\end{table}

For instance, when $n=4$ the only root of $R$ is $x^*=-3g_1/(2g_1+g_2)$, and the statement $x^*\in(-\tfrac32,0)$ of \cref{thm:main} amounts to $g_1>0$ and $g_2=b_1b_2>0$, i.e.\ to the inequalities $a_1a_3>a_0a_4$ and $3a_2^2+a_0a_4>4a_1a_3$ for real-rooted quartics with positive coefficients.
\end{remark}

\section{Concluding remarks}\label{sec:remarks}

\Cref{conj:karp} is stated for $\PF$ sequences, and finite $\PF$ sequences may have $a_0=0$ or $a_n=0$ (their generating polynomials then have a root at the origin, or degree less than $n$). In that case the word ``negative'' must be replaced by ``nonpositive'', as the following example and proposition show.

\begin{example}
For $(a_0,\dots,a_4)=(0,0,1,2,1)$, i.e.\ $p(z)=z^2(1+z)^2$, one finds
\[
P_4(x)=12\,x(x+1),
\]
which vanishes at $x=0$. For $(a_0,\dots,a_3)=(0,0,1,1)$ all products $a_ka_{3-k}$ vanish and $P_3\equiv0$.
\end{example}

\begin{proposition}\label{prop:degenerate}
Let $n\ge2$ and let $a_0,\dots,a_n\ge0$, not all zero, be such that $p(z)=\sum_ka_kz^k$ has only real roots. Then either $P_n\equiv0$, or $P_n$ has degree $n-2$ and all its roots are real and lie in $[-\frac{n-1}{2},0]$. Moreover, $0$ is a root of $P_n$ if and only if $a_1a_{n-1}=a_0a_n$.
\end{proposition}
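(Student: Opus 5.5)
The plan is a limiting argument from \cref{thm:main}, combined with a direct evaluation at $x=0$. First I handle the degenerate structure. Since $p$ has nonnegative coefficients and only real roots, all its roots are nonpositive, so $p(z)=a_Nz^{L}\prod(z+r_i)$ with $r_i>0$, where $L$ is the lowest index with $a_L\ne0$ and $N$ is the degree. I then perturb: set
\[
p_\delta(z)=a_N(z+\delta)^{L}(1+\delta z)^{n-N}\prod_i(z+r_i)\quad\text{for }\delta>0 .
\]
This polynomial has degree exactly $n$, positive coefficients and only negative real roots, and its coefficients $a_k(\delta)\to a_k$ as $\delta\to0^+$. By \cref{thm:main}, each $P_n^{(\delta)}$ has degree $n-2$, positive leading coefficient, and all roots in $[-\frac{n-1}{2},0)$. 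Because $P_n$ depends polynomially (quadratically) on $(a_0,\dots,a_n)$ through \eqref{eq:Pn}, we get $P_n^{(\delta)}\to P_n$ coefficientwise.

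Next comes the limit step, which I expect to be the main obstacle, since the degree could drop in the limit. Write $P_n^{(\delta)}=\kappa_\delta\prod_{i=1}^{n-2}(x+\theta_i(\delta))$ with $\theta_i(\delta)\in(0,\frac{n-1}{2}]$. The normalized monic polynomials $\prod(x+\theta_i(\delta))$ have uniformly bounded coefficients. Passing to a subsequence, $\theta_i(\delta)\to\theta_i\in[0,\frac{n-1}{2}]$. The coefficient of $x^{n-2}$ in $P_n^{(\delta)}$ is $\kappa_\delta$, and it converges to the $x^{n-2}$-coefficient $\kappa$ of $P_n$; here \eqref{eq:Pn} shows $\deg P_n\le n-2$ for every sequence, and Karp's computation gives this. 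If $\kappa=0$, then all coefficients of $P_n^{(\delta)}$ tend to $0$, because they are $\kappa_\delta$ times bounded symmetric functions. Hence $P_n\equiv0$. If $\kappa>0$, then $P_n=\kappa\prod(x+\theta_i)$, which has degree $n-2$ and all roots in $[-\frac{n-1}{2},0]$. To justify $\deg P_n\le n-2$ independently, I would note that the $x^n$ and $x^{n-1}$ coefficients of each bracket in \eqref{eq:Pn} cancel: the brackets have equal leading terms, and their $x^{n-1}$ coefficients are $\binom k2+\binom{n-k}2$ versus $\binom{k}{2}+k+\binom{n-k}{2}-(n-k)$, so those coefficients are killed after symmetrizing $k\leftrightarrow n-k$ using $c_k=c_{n-k}$.

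Finally, I handle the root at $0$. I evaluate \eqref{eq:Pn} at $x=0$: $(0)_k=0$ for $k\ge1$, and $(-1)_{n-k}=0$ for $n-k\ge2$. So only the terms $k\in\{0,n-1,n\}$ can contribute, and I compute these directly. The $k=0$ term gives $a_0a_n[(0)_n-(-1)_n]$, which equals $-a_0a_n\,n!\cdot 0$ unless $n\le1$, so it vanishes for $n\ge2$. The $k=n$ term gives $a_na_0[0-(1)_n]=-n!\,a_0a_n$. The $k=n-1$ term gives $na_{n-1}a_1[0-(1)_{n-1}(-1)]=n!\,a_1a_{n-1}$. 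Thus $P_n(0)=n!(a_1a_{n-1}-a_0a_n)$, which matches Karp's formula recalled in the Low degrees remark. Hence $0$ is a root iff $a_1a_{n-1}=a_0a_n$; this includes the case $P_n\equiv0$, which is consistent.
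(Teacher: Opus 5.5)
Your proposal is correct and follows essentially the same route as the paper: the same perturbation $p_\delta$ with the factors $(z+\delta)^{L}$ and $(1+\delta z)^{n-N}$, the same compactness argument on the roots of $P_n^{(\delta)}$ with the dichotomy $\kappa=0$ or $\kappa>0$, and the formula $P_n(0)=n!\,(a_1a_{n-1}-a_0a_n)$ for the last claim. The differences are minor: you derive that formula directly from \eqref{eq:Pn} rather than citing Karp, and your separate check that $\deg P_n\le n-2$ is redundant, since it already follows from taking the coefficientwise limit of polynomials of degree $n-2$.
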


\begin{proof}
Write $p(z)=\lambda\,z^\mu\prod_{i=1}^{\nu}(z+r_i)$ with $\lambda>0$, $r_i>0$ and $\mu+\nu=\deg p\le n$. For $\delta>0$ put  \[p_\delta(z)=\lambda\,(z+\delta)^\mu(1+\delta z)^{n-\mu-\nu}\prod_i(z+r_i).\] Then $p_\delta$ has degree $n$, positive coefficients and only real roots, and its coefficients tend to $a_0,\dots,a_n$ as $\delta\to0^+$. Since the coefficients of $P_n$ are polynomial functions of $a_0,\dots,a_n$, the corresponding polynomials $P_n^{(\delta)}$ converge coefficientwise to $P_n$. By \cref{thm:main}, $P_n^{(\delta)}(x)=\kappa_\delta\prod_{i=1}^{n-2}(x-x_i(\delta))$ with $\kappa_\delta>0$ and $x_i(\delta)\in[-\frac{n-1}2,0)$. The leading coefficients $\kappa_\delta$ converge to the coefficient $\kappa_0\ge0$ of $x^{n-2}$ in $P_n$. Choosing a sequence $\delta_l\to0$ along which every $x_i(\delta_l)$ converges to some $x_i\in[-\frac{n-1}2,0]$, we obtain $P_n(x)=\kappa_0\prod_i(x-x_i)$. If $\kappa_0=0$ then $P_n\equiv0$; otherwise $P_n$ has the asserted form. The last statement follows from Karp's formula $P_n(0)=n!\,(a_1a_{n-1}-a_0a_n)$ \cite[(14)]{Ka12}, which holds for arbitrary coefficients.
\end{proof}

Since the polynomials $P_n^r$ of \cite[\S4]{Ka12}, defined from the $r\times r$ Toeplitz determinant $\det\bigl(f(x+j-i;z)\bigr)_{i,j=1}^r$, satisfy $P_n^2=P_n$, \cref{thm:main} and \cref{cor:PF} also prove the case $r=2$ of \cite[Conjectures~5 and~6]{Ka12} for positive $\PF$ sequences. The same conjectures for $r\ge3$, and \cite[Conjectures~1 and~2]{Ka12} for log-concave sequences and general shifts $\alpha,\beta>0$, are not addressed here. (Karp's polynomials $Q_n^{\alpha,\beta}$ carry superscripts and should not be confused with the polynomials $Q_k$ of \cref{sec:recurrence}.) The approach above relies on the bilinear structure of the $2\times2$ determinant and on the symmetric shifts $\pm1$, which make $\Lop$ computable in closed form on the polynomials $z^j(z+1)^{n-2j}$ (\cref{lem:F}). It would be interesting to find analogous decompositions for the multilinear expressions arising for $r\ge3$.

\subsection*{Use of AI tools}
The author used GPT-6-Astra to explore proof strategies, to help obtain an earlier proof of \cref{conj:karp} that differs from the one given here, to help draft the text, and to write the Python code for the computational
checks; Claude helped design the figures to the author's specifications. The proof presented here is the author's own, and the author takes full responsibility for the contents of the paper.

\subsection*{Acknowledgements}
The author thanks Anna Dietrich, Damien Galant, and Javier G\'omez-Serrano for their mentorship and support. This paper and the results within it were made possible by the generosity and vision of an anonymous donor.


\end{document}